\newtheorem{thm}{Theorem}
\newtheorem{ej}{Example}
\newtheorem{lemma}[thm]{Lemma}
\newtheorem{prop}[thm]{Proposition}
\newtheorem{rem}{Remark}
\def\C{{\mathbb C}}
\def\D{{\mathbb D}}
\newcommand{\RR}{\text{Re}}
\def\be{\begin{equation}}
\def\ee{\end{equation}}
\begin{document}

\title[Schwarzian derivative for convex mappings of order alpha]{Schwarzian derivative for convex mappings of order alpha}

\author{P. Carrasco\and R. Hernández }
\thanks{The
authors were partially supported by Fondecyt Grants \# 1190756.
\endgraf  {\sl Key words:} {Convex Mappings, Schwarzian derivative, Schwarz Lemma.}
\endgraf {\sl 2020 AMS Subject Classification}. Primary:30C45, 30C80; \,
Secondary:  30A10.}

\address{Facultad de Ingenier\'ia y Ciencias\\
Universidad Adolfo Ib\'a\~nez\\
Av. Padre Hurtado 750, Vi\~na del Mar, Chile.}
\email{pablo.carrasco@edu.uai.cl}

\address{Facultad de Ingenier\'ia y Ciencias\\
Universidad Adolfo Ib\'a\~nez\\
Av. Padre Hurtado 750, Vi\~na del Mar, Chile.}
\email{rodrigo.hernandez@uai.cl}


\begin{abstract} The main purpose of this paper is to obtain sharp bounds of the norm of Schwarzian derivative for convex mappings of order $alpha$ in terms of the value of $f''(0)$, in particular, when this quantity is equal to zero. In addition, we obtain sharp bounds for distortion and growth for this mappings and we generalized the results obtained by Suita \cite{SUITA} and Yamashita \cite{Y} for this particular case.
\end{abstract}

\maketitle

\section{Introduction}

Let $\mathcal{C}$ be the class of conformal functions such that $f(\D)$ is a convex region in the complex plane, normalized by the conditions $f(0)=0$ and $f'(0)=1$, where $\D$ is the unit disk in $\C$. A very important characterization of these functions is that $f\in\mathcal{C}$ if and only if 
\begin{equation*}
    \RR\left\{1+z\,\dfrac{f''}{f'}(z)\right\}>0,\quad z\in\mathbb{D}.
\end{equation*}
Nehari in \cite{NEHARI76} shows that if $f\in\mathcal{C}$ then
\begin{equation}\label{nehari}
    (1-\lvert z\rvert ^2)^2\,\lvert Sf(z)\rvert\leq 2,\quad z\in\D,
\end{equation}
where the differential operator $Sf$ is known as the Schwarzian derivative of $f$ and it is defined as:
\begin{equation*}
    Sf(z)=\left(\dfrac{f''}{f'}(z)\right)'-\dfrac{1}{2}\left(\dfrac{f''}{f'}(z)\right)^2,
\end{equation*}
considering that $f$ is locally univalent in $\D$. Inequality (\ref{nehari}) was obtained by Nehari applying geometric arguments via the Schwarz-Christoffel formula for convex polygons. In the same time, Trimble in \cite{TRIMBLE} showed that if $f(z)=z+a_2z^2+a_3z^3+\cdots$ maps $\D$ into a convex domain, then $|a_3-a_2^2|\leq(1-|a_2|^2)/3$. Since $\mathcal C$ is a linear invariant family (see Remark \ref{remark_1}) this inequality implies also (\ref{nehari}). Later, Chuaqui et al. in \cite{CDO} proved the same result by applying the Schwarz-Pick lemma and the the fact that the expression  $1+z(f''/f')(z)$ is subordinate to the half plane mapping $\ell(z)=(1+z)/(1-z)$, which is
\begin{equation}\label{convex_subo}
    1+z\,\dfrac{f''}{f'}(z)=\ell(w(z))=\dfrac{1+w(z)}{1-w(z)},
\end{equation}
for some function $w:\D\to\D$ holomorphic and such that $w(0)=0$. The expression defined in (\ref{convex_subo}) allowed the authors to obtain other characterizations for convex functions:
\begin{equation}\label{charac1_convex}
    f\in\mathcal{C}\text{ if and only if, } \RR\left\{1+z\,\dfrac{f''}{f'}(z)\right\}\geq \dfrac{1}{4}(1-\lvert z\rvert ^2)\left\lvert\dfrac{f''}{f'}(z)\right\rvert^2,
\end{equation}
and
\begin{equation}\label{charac2_convex}
    f\in\mathcal{C}\text{ if and only if, } \left\lvert(1-\lvert z\rvert^2)\dfrac{f''}{f'}(z)-2\bar{z}\right\rvert\leq 2,
\end{equation}
for all $z\in\D$. 

One of the many extensions of the class of convex functions was given by Robertson in \cite{ROBERTSON}: $f$ is a convex function of order $\alpha$, with $0\leq\alpha<1$, if
\begin{equation*}\label{convex_alpha}
    \RR\left\{1+z\,\dfrac{f''}{f'}(z)\right\}>\alpha.
\end{equation*}
We denote the class of convex function of order $\alpha$ as $\mathcal{C}_\alpha$. Observe that $\mathcal{C}_0=\mathcal{C}$ and $\mathcal{C}_\alpha\subset \mathcal{C}$ for all $0\leq\alpha<1$. Robertson \cite{ROBERTSON} observed that functions $f$ in $\mathcal C_\alpha$, have the following geometric property: the ratio of the angle between adjacent tangents of the unit circle over the
angle between the corresponding tangents in the image of $f$ is less than $1/\alpha$. Hence the closer $\alpha$ is to 1 the ``rounder'' is the image. For instance, segments in the boundary of $f(\D)$ are prohibited as soon as $\alpha > 0$. An analogous bound associated to (\ref{nehari}) was obtained by Suita in \cite{SUITA} for this class. We present this result as Theorem A in section \ref{norm of Sf}.The author obtains his result characterizing the expression $1+z(f''/f')(z)$ in terms of their Herglotz representation associated to the Poisson Kernel (see \cite[p. 15]{DUREN_UF}). In \cite[p. 54]{GRAHAM_KOHR} there is a review of the most important properties of the class $\mathcal{C}_\alpha$.

In this work, we apply some of the ideas presented in \cite{CDO}, in particular the Schwarz Lemma and its generalizations, to obtain analogous inequalities associated to (\ref{charac1_convex}) and (\ref{charac2_convex}) for the convex of order $\alpha$ case, and a sharp bound for the norm of the Schwarzian derivative when $f\in \mathcal C_\alpha$ with the additional condition that $f''(0)=0$. In the same path, Kanas and Sugawa in \cite{KS} proved similar results, but for the case of strongly convex mappings, which can not be improved with our additional assumption since the extremal mapping satisfies that $f''(0)=0$. Yamashita in \cite{Y} showed that the norm of the pre-Schwarzian derivative for convex mappings with order alpha is less than or to $4(1-\alpha)$, which is improved in the Proposition \ref{prop_3} assuming that $f''(0)=0$. In fact, we shall show that $(1-\lvert z\rvert^2)\,\lvert f''(z)/f'(z)\rvert\leq 2(1-\alpha)$. Also, for restricted values of $\lvert f''(0)\rvert\neq 0$, we obtain a better bound than the one presented by Suita for $\lVert Sf\rVert$ in the general case.

\section{Bounds on convex mappings with order $\alpha$.}

The M\"obius transformation:
\begin{equation*}
    \ell_\alpha(z)=\dfrac{1+(1-2\alpha)z}{1-z},
\end{equation*}
sends $\D$ to the half plane $H_\alpha=\{z\in\C:\RR(z)>\alpha\}$. If $f\in\mathcal{C}_\alpha$ then there exists $w:\D\rightarrow\D$ such that:
\begin{equation*}
    1+z\,\dfrac{f''}{f'}(z)=\ell_\alpha(w(z))=\dfrac{1+(1-2\alpha)w(z)}{1-w(z)},
\end{equation*}
 with $w(0)=0$, analogously as in (\ref{convex_subo}). We can assume that $w(z)=z\varphi(z)$ for some holomorphic mapping $\varphi$ that satisfies $\varphi(\D)\subseteq\D$, from where:
\begin{equation}\label{preschwarz_alpha}
\dfrac{f''}{f'}(z)=2(1-\alpha)\dfrac{\varphi(z)}{1-z\varphi(z)},
\end{equation}

The next theorem generalizes the characterizations (\ref{charac1_convex}) and (\ref{charac2_convex}) to convex functions of order $\alpha$.

\begin{thm}\label{teo_1}
If $0\leq\alpha<1$, then the following are equivalent:

\begin{enumerate}
\item[i.] $f\in\mathcal{C}_\alpha$.
\item[ii.] $\RR\left\{1+z\dfrac{f''}{f'}(z)\right\}\geq \alpha+\dfrac{1}{4(1-\alpha)}(1-\lvert z\rvert^2)\left\lvert\dfrac{f''}{f'}(z)\right\rvert^2$.
\item[iii.] $\left\lvert(1-\lvert z\rvert^2)\dfrac{f''}{f'}(z)-2(1-\alpha)\bar{z}\right\rvert\leq 2(1-\alpha)$.
\end{enumerate}
\end{thm}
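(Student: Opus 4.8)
The plan is to prove the cycle (i) $\Rightarrow$ (iii) $\Leftrightarrow$ (ii) $\Rightarrow$ (i), in the spirit of the arguments of \cite{CDO} for the case $\alpha=0$; the whole proof reduces to the elementary Schwarz–Pick bound $|\varphi(z)|\le1$ applied to the function $\varphi$ appearing in (\ref{preschwarz_alpha}), plus one purely algebraic identity and one appeal to the minimum principle for harmonic functions.

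\textbf{Step 1: (i) $\Rightarrow$ (iii).} If $f\in\mathcal{C}_\alpha$, use the representation (\ref{preschwarz_alpha}): there is a holomorphic $\varphi\colon\D\to\D$ with $\dfrac{f''}{f'}(z)=2(1-\alpha)\dfrac{\varphi(z)}{1-z\varphi(z)}$. Writing $\varphi=\varphi(z)$ and clearing denominators, the terms $|z|^2\varphi$ cancel and one obtains the identity
\[
(1-|z|^2)\,\dfrac{f''}{f'}(z)-2(1-\alpha)\bar z \;=\; 2(1-\alpha)\,\dfrac{\varphi-\bar z}{1-z\varphi}.
\]
Hence (iii) is equivalent to $|\varphi-\bar z|\le|1-z\varphi|$, and expanding both squared moduli gives
\[
|1-z\varphi|^2-|\varphi-\bar z|^2=(1-|z|^2)\bigl(1-|\varphi|^2\bigr)\ge 0,
\]
since $|\varphi(z)|\le1$. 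This proves (iii), with equality exactly when $\varphi$ is a unimodular constant.

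\textbf{Step 2: (ii) $\Leftrightarrow$ (iii).} This is purely algebraic. For $z\neq0$ put $u=z\,\dfrac{f''}{f'}(z)$, so that $1+z\,\dfrac{f''}{f'}(z)=1+u$ and $\dfrac{f''}{f'}(z)=u/z$, and note $\RR(u z^{-1}z)=\RR(u)$, $|u/z|^2=|f''/f'|^2$. Squaring the inequality in (iii) and using $\overline{2(1-\alpha)\bar z}=2(1-\alpha)z$ yields
\[
(1-|z|^2)^2\,\Bigl|\dfrac{f''}{f'}(z)\Bigr|^2-4(1-\alpha)(1-|z|^2)\,\RR(u)\;\le\;4(1-\alpha)^2(1-|z|^2);
\]
dividing by $1-|z|^2>0$, rearranging, and using $\RR\{1+u\}-\alpha=(1-\alpha)+\RR(u)$, this is exactly
\[
\RR\Bigl\{1+z\,\dfrac{f''}{f'}(z)\Bigr\}-\alpha\;\ge\;\dfrac{1}{4(1-\alpha)}(1-|z|^2)\Bigl|\dfrac{f''}{f'}(z)\Bigr|^2,
\]
which is (ii). For $z=0$ both statements are trivial. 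So (ii) and (iii) hold or fail simultaneously at each point of $\D$.

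\textbf{Step 3: (ii) $\Rightarrow$ (i).} The right-hand side of (ii) is nonnegative, so (ii) forces $\RR\{1+z f''/f'(z)\}\ge\alpha$ on $\D$. The function $\RR\{1+z f''/f'(z)\}-\alpha$ is harmonic, nonnegative, and equals $1-\alpha>0$ at the origin; if it vanished at an interior point it would be identically zero by the minimum principle, which is impossible. Hence $\RR\{1+z f''/f'(z)\}>\alpha$ on $\D$, i.e.\ $f\in\mathcal{C}_\alpha$. Taking $\alpha=0$ in (ii) and (iii) recovers (\ref{charac1_convex}) and (\ref{charac2_convex}).

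The calculations are routine; the two places that merit a little care are the cancellation producing the identity in Step 1 — this is what turns the geometric condition (iii) into the bare Schwarz–Pick inequality $|\varphi|\le1$ — and the sign bookkeeping when expanding the squared modulus in Step 2. If one prefers a self-contained substitute for Step 3, one can prove (iii) $\Rightarrow$ (i) directly: solve (\ref{preschwarz_alpha}) for $\varphi$ in terms of $f$, observe that (iii) rules out the vanishing of the denominator $2(1-\alpha)+z f''/f'(z)$ (so $\varphi$ is holomorphic on $\D$), and reverse the identity of Step 1 to conclude $|\varphi(z)|\le1$; then $w(z)=z\varphi(z)$ maps $\D$ into $\D$ with $w(0)=0$ and $1+z f''/f'(z)=\ell_\alpha(w(z))\in H_\alpha$. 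Routing through (ii) is shorter.
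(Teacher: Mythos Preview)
Your proof is correct and follows essentially the same approach as the paper: both reduce everything to the bound $|\varphi|\le1$ coming from the representation (\ref{preschwarz_alpha}), and your algebraic equivalence (ii) $\Leftrightarrow$ (iii) is exactly the paper's passage from (\ref{equa_1}) to the squared form of (iii). Your version is in fact slightly more complete, since you close the cycle with the minimum-principle argument for (ii) $\Rightarrow$ (i), a step the paper leaves implicit by treating the chain of inequalities as reversible.
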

\begin{proof}
We prove that $i.$ is equivalent to $ii.$ and $ii.$ is equivalent to $iii.$. From (\ref{preschwarz_alpha}) we obtain that:
\begin{equation}\label{phi_preschwarz}
    \varphi(z)=\dfrac{(f''/f')(z)}{2(1-\alpha)+z(f''/f')(z)}.
\end{equation}
From the fact that $\lvert\varphi(z)\rvert^2\leq 1$ we have:
\begin{equation*}
    4(1-\alpha)^2+2\,\RR\left\{2(1-\alpha)z\,\dfrac{f''}{f'}(z)\right\}+\lvert z\rvert^2\left\lvert\dfrac{f''}{f'}(z)\right\rvert^2\geq \left\lvert\dfrac{f''}{f'}(z)\right\rvert^2.
\end{equation*}
Factorizing we obtain,
\begin{equation}\label{equa_1}
    4(1-\alpha)\left[1-\alpha+\RR\left\{z\,\dfrac{f''}{f'}(z)\right\}\right]\geq (1-\lvert z\rvert^2)\left\lvert\dfrac{f''}{f'}(z)\right\rvert^2.
\end{equation}
Since $\alpha\neq 1$ we conclude that
\begin{equation*}
    \RR\left\{1+z\,\dfrac{f''}{f'}(z)\right\}\geq \alpha+\dfrac{1}{4(1-\alpha)}(1-\lvert z\rvert^2)\left\lvert\dfrac{f''}{f'}(z)\right\rvert^2.
\end{equation*}
If we multiply both sides of inequality (\ref{equa_1}) by $(1-\lvert z\rvert^2)$, we obtain:
\begin{equation*}
    (1-\lvert z\rvert^2)^2\left\lvert\dfrac{f''}{f'}(z)\right\rvert^2\leq 4(1-\alpha)^2(1-\lvert z\rvert^2)+4(1-\alpha)(1-\lvert z\rvert^2)\RR\left\{\dfrac{f''}{f'}(z)\right\}.
\end{equation*}
This is equivalent to:
\begin{equation*}
    \left((1-\lvert z\rvert^2)\left\lvert\dfrac{f''}{f'}(z)\right\rvert\right)^2-2\,\RR\left\{2(1-\alpha)z(1-\lvert z\rvert^2)\dfrac{f''}{f'}(z)\right\}+(2(1-\alpha)\lvert z\rvert)^2\leq 4(1-\alpha)^2,
\end{equation*}
from which we have that:
\begin{equation*}
    \left\lvert(1-\lvert z\rvert^2)\dfrac{f''}{f'}(z)-2(1-\alpha)\bar{z}\right\rvert\leq 2(1-\alpha).
\end{equation*}
\end{proof}

It is a well-known fact that if $f\in \mathcal C$, then for any $a\in\D$ the corresponding \textit{Koebe Transform} $f_a$ given by   \begin{equation}\label{koebe_trans}
    f_a(z)=\dfrac{f\left(\dfrac{z+a}{1+\bar az} \right)-f(a)}{(1-\lvert a\rvert^2)f'(a)},
    \end{equation}
belongs to $\mathcal C$. However, if $f\in\mathcal{C}_\alpha$ then it is not necessarily that $f_a\in \mathcal C_\alpha$. For this case we have the following proposition:

\begin{prop} If $f\in \mathcal C_\alpha$ then for any $a\in\D$, $f_a\in\mathcal C_\beta$ with $\beta=\alpha\left(\dfrac{1-\lvert a\rvert}{1+\lvert a\rvert}\right)$.
\end{prop}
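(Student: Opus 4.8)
The plan is to use the characterization~(iii) of Theorem~\ref{teo_1} applied to both $f$ and $f_a$, together with the standard linear-invariance computation relating the pre-Schwarzian of the Koebe transform $f_a$ to that of $f$. Recall that if $\sigma_a(z)=(z+a)/(1+\bar a z)$, then differentiating the definition~(\ref{koebe_trans}) gives the well-known identity
\begin{equation*}
\frac{f_a''}{f_a'}(z)=\left[\frac{f''}{f'}(\sigma_a(z))\right]\sigma_a'(z)+\frac{\sigma_a''}{\sigma_a'}(z),
\end{equation*}
and since $\sigma_a$ is a disk automorphism one has $\sigma_a'(z)=(1-|a|^2)/(1+\bar a z)^2$ and $(1-|z|^2)|\sigma_a'(z)|=1-|\sigma_a(z)|^2$, plus $\sigma_a''/\sigma_a'(z)=-2\bar a/(1+\bar a z)$. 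The goal is to deduce from the inequality
$$\left|(1-|w|^2)\tfrac{f''}{f'}(w)-2(1-\alpha)\bar w\right|\le 2(1-\alpha)$$
(valid for all $w\in\D$, in particular $w=\sigma_a(z)$) the corresponding inequality for $f_a$ with $\alpha$ replaced by $\beta=\alpha(1-|a|)/(1+|a|)$.

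First I would write $P(z):=(1-|z|^2)(f_a''/f_a')(z)$, substitute the identity above, and use $(1-|z|^2)\sigma_a'(z)=(1-|\sigma_a(z)|^2)(1+\bar a z)/\overline{(1+\bar a z)}$ (a unimodular factor times the ``natural'' weight) to express $P(z)$ as $e^{i\theta(z)}\big[(1-|w|^2)(f''/f')(w)\big]+(1-|z|^2)\sigma_a''/\sigma_a'(z)$ with $w=\sigma_a(z)$. Feeding in the hypothesis on $f$, the first bracket lies in the closed disk centered at $2(1-\alpha)\bar w$ of radius $2(1-\alpha)$; after multiplying by the unimodular factor and adding the (explicit) automorphism term, $P(z)$ lies in a closed disk whose center and radius are explicit functions of $z$ and $a$. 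The crux is then to verify the algebraic inequality asserting that this disk is contained in the disk $\{|\,\cdot\,-2(1-\beta)\bar z|\le 2(1-\beta)\}$ for the claimed $\beta$; equivalently, that the center of the $P$-disk is within distance $2(1-\beta)-2(1-\alpha)$ of $2(1-\beta)\bar z$. I expect this reduces, after clearing denominators $|1+\bar a z|$, to a pointwise estimate in which the worst case occurs when $z$, $a$ are positioned so that $|1+\bar a z|$ is as small as possible, i.e. essentially the Schwarz–Pick extremal configuration; tracking that extremal case is what produces the factor $(1-|a|)/(1+|a|)$ and hence pins down $\beta$.

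The main obstacle I anticipate is precisely this last containment inequality: carefully controlling the unimodular phase factor $e^{i\theta(z)}$ (which rotates the center $2(1-\alpha)\bar w$ away from the direction of $\bar z$) and showing the residual displacement is absorbed by the slack $2(\alpha-\beta)$. A cleaner route, which I would try first, is to avoid the geometry of disks and instead argue directly from characterization~(ii): namely show $\RR\{1+z(f_a''/f_a')(z)\}\ge\beta$ by using the known fact that $\RR\{1+w(f''/f')(w)\}\ge\alpha$ is equivalent to $1+w(f''/f')(w)=\ell_\alpha(\omega(w))$ with $\omega:\D\to\D$, $\omega(0)=0$, computing $1+z(f_a''/f_a')(z)$ explicitly in terms of $\omega(\sigma_a(z))$ and $\sigma_a$, and then minimizing its real part over $|z|=r$ using the Schwarz–Pick lemma applied to $\omega\circ\sigma_a$ (whose value at $z=0$ is $\omega(a)$, with $|\omega(a)|\le|a|$). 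The minimum of the real part of the resulting Möbius image of the disk $|\zeta|\le$ (Schwarz–Pick bound) should come out to exactly $\alpha(1-|a|)/(1+|a|)$, with equality traced to a rotation of the Koebe-type extremal function; sharpness of $\beta$ would then follow from that same extremal. Either way, once the pointwise inequality characterizing $\mathcal C_\beta$ is established for $f_a$, Theorem~\ref{teo_1} immediately gives $f_a\in\mathcal C_\beta$.
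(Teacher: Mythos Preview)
Your overall direction---compute $f_a''/f_a'$ via the chain rule and then establish a lower bound on $\RR\{1+z\,f_a''/f_a'(z)\}$---is correct and coincides with the paper's. However, both of your proposed executions are more elaborate than necessary, and in neither do you identify the step that actually closes the argument.

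In your first route (via characterization~(iii)) you yourself flag the obstacle: the disk-containment inequality with the floating unimodular factor $e^{i\theta(z)}$ is awkward, and you leave it unresolved. In your second route, note that $1+z\,f_a''/f_a'(z)$ is \emph{not} a M\"obius function of $\omega(\sigma_a(z))$ alone; writing $w=\sigma_a(z)$ one gets
\[
1+z\,\frac{f_a''}{f_a'}(z)=1+\frac{z\,\sigma_a'(z)}{\sigma_a(z)}\cdot\frac{2(1-\alpha)\,\omega(w)}{1-\omega(w)}-\frac{2\bar a z}{1+\bar a z},
\]
so ``minimize the real part over the M\"obius image of a Schwarz--Pick disk'' does not apply as stated, and in fact no Schwarz--Pick estimate on $\omega$ is needed at all.

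The simplification you are missing, and which the paper uses, is to reduce to $|z|=1$ via the minimum principle for the harmonic function $\RR\{1+z\,f_a''/f_a'(z)\}$ (equivalently, work on $|z|=r$ and let $r\to 1$). On $|z|=1$ one has $z+a=z\,\overline{(1+\bar a z)}$, hence
\[
\frac{z\,\sigma_a'(z)}{\sigma_a(z)}=\frac{z(1-|a|^2)}{(1+\bar a z)(z+a)}=\frac{1-|a|^2}{|1+\bar a z|^2},
\]
which is real and positive. The hypothesis $\RR\{\zeta\,f''/f'(\zeta)\}\ge\alpha-1$ (with $\zeta=\sigma_a(z)$) can therefore be inserted directly, with no phase to track; the remaining terms combine algebraically to $\alpha(1-|a|^2)/|1+\bar a z|^2\ge\alpha(1-|a|)/(1+|a|)=\beta$. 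Neither the subordination function $\omega$ nor characterization~(iii) enters.
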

\begin{proof} By equation (\ref{koebe_trans}) we have \begin{equation*}
    \dfrac{f''_a}{f'_a}(z)=\dfrac{f''}{f'}\left(\dfrac{z+a}{1+\bar az} \right)\,\dfrac{1-\lvert a\rvert^2}{(1+\bar{a}z)^2}-\dfrac{2\bar{a}}{1+\bar{a}z}.
\end{equation*}
If we assume that $\lvert z\rvert=1$ (because of the maximum modulus principle) we have that:
\begin{align*}
    1+z\,\dfrac{f''_a}{f'_a}(z) & = 1+\left(\dfrac{z+a}{1+\bar az} \right)\,\dfrac{f''}{f'}\left(\dfrac{z+a}{1+\bar az} \right)\,\dfrac{z(1-\lvert a\rvert^2)(1+\bar{a}z)}{(1+\bar{a}z)^2(z+a)}-\dfrac{2\bar{a}z}{1+\bar{a}z}\\
    & = 1+\left(\dfrac{z+a}{1+\bar az} \right)\,\dfrac{f''}{f'}\left(\dfrac{z+a}{1+\bar az} \right)\,\dfrac{1-\lvert a\rvert^2}{\lvert 1+\bar{a}z\rvert^2}-\dfrac{2\bar{a}z}{1+\bar{a}z}.
\end{align*}
Let $\zeta=\left(\dfrac{z+a}{1+\bar az} \right)\in\D$, from which $ \RR\left\{\zeta\,\dfrac{f''}{f'}(\zeta)\right\}>\alpha-1$.
Then, for $\lvert z\rvert=1$ we have that
\begin{align*}
    \RR\left\{1+z\,\dfrac{f''_a}{f'_a}(z)\right\} & = 1+\left(\dfrac{1-\lvert a\rvert^2}{\lvert 1+\bar{a}z\rvert^2}\right)\RR\left\{\zeta\,\dfrac{f''}{f'}(\zeta)\right\}-2\,\RR\left\{\dfrac{\bar{a}z}{1+\bar{a}z}\right\}\\
    & \geq 1+\left(\dfrac{1-\lvert a\rvert^2}{\lvert 1+\bar{a}z\rvert^2}\right)(\alpha-1)-2\,\RR\left\{\dfrac{\bar{a}z}{1+\bar{a}z}\right\}\\
    & = 1+\alpha\left(\dfrac{1-\lvert a\rvert ^2}{\lvert 1+\bar{a}z\rvert^2}\right)-\left(\dfrac{1-\lvert a\rvert^2+2\,\RR\{\bar{a}z(1+a\bar{z}\}}{\lvert 1+\bar{a}z\rvert^2}\right)\\
    & = 1+\alpha\left(\dfrac{1-\lvert a\rvert ^2}{\lvert 1+\bar{a}z\rvert^2}\right)-\left(\dfrac{1+2\,\RR\{\bar{a}z\}+\lvert a\rvert^2}{\lvert 1+\bar{a}z\rvert^2}\right)\\
    & = \alpha\left(\dfrac{1-\lvert a\rvert^2}{\lvert 1+\bar{a}z\rvert^2}\right)\geq  \alpha\left(\dfrac{1-\lvert a\rvert^2}{(1+\lvert a\rvert)^2}\right)=\alpha\left(\dfrac{1-\lvert a\rvert}{1+\lvert a\rvert}\right).
\end{align*}
From the above we can conclude that if $f\in\mathcal{C}_\alpha$ then $f_a\in\mathcal{C}_\beta$.
\end{proof}

Now, with the additional condition that $f''(0)=0$, equation (\ref{preschwarz_alpha}) shows that $\varphi$ satisfies that $\varphi(z)=z\psi(z)$, for some holomorphic mapping $\psi$ with $\lvert\psi\rvert<1$. We say that $f\in \mathcal C_\alpha^0$ if $f\in\mathcal C_\alpha$ and $f''(0)=0$. Thus, a straightforward calculation shows that:

\begin{prop}\label{prop_3} If $f\in \mathcal{C}_\alpha^0$ then \begin{equation*}
\left(1-\lvert z\rvert^2\right)\left\lvert\dfrac{f''}{f'}(z)\right\rvert\leq 2\lvert z\rvert(1-\alpha),\ \forall z\in\D.
\end{equation*}
\end{prop}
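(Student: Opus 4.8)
The plan is to feed the special form of $\varphi$ into (\ref{preschwarz_alpha}) and then run a short Schwarz-type estimate. As observed just before the statement, the hypothesis $f''(0)=0$ forces $\varphi(0)=0$ in (\ref{preschwarz_alpha}), so the Schwarz lemma allows us to write $\varphi(z)=z\psi(z)$ with $\psi$ holomorphic on $\D$ and $\lvert\psi(z)\rvert\le 1$ for every $z\in\D$. Substituting $\varphi(z)=z\psi(z)$ into (\ref{preschwarz_alpha}) gives
\begin{equation*}
\dfrac{f''}{f'}(z)=2(1-\alpha)\,\dfrac{z\,\psi(z)}{1-z^{2}\psi(z)},\qquad z\in\D .
\end{equation*}

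Next I would bound the right-hand side directly. For $z=0$ both sides of the asserted inequality vanish, so assume $z\ne 0$. By the triangle inequality $\lvert 1-z^{2}\psi(z)\rvert\ge 1-\lvert z\rvert^{2}\lvert\psi(z)\rvert>0$, whence
\begin{equation*}
(1-\lvert z\rvert^{2})\left\lvert\dfrac{f''}{f'}(z)\right\rvert
=2(1-\alpha)\,\lvert z\rvert\,\dfrac{(1-\lvert z\rvert^{2})\,\lvert\psi(z)\rvert}{\lvert 1-z^{2}\psi(z)\rvert}
\le 2(1-\alpha)\,\lvert z\rvert\,\dfrac{(1-\lvert z\rvert^{2})\,\lvert\psi(z)\rvert}{1-\lvert z\rvert^{2}\lvert\psi(z)\rvert}.
\end{equation*}
So it suffices to verify $(1-\lvert z\rvert^{2})\lvert\psi(z)\rvert\le 1-\lvert z\rvert^{2}\lvert\psi(z)\rvert$, and this rearranges immediately to $\lvert\psi(z)\rvert\le 1$, which holds. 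This gives the claimed bound.

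There is no serious obstacle: the computation is precisely the ``straightforward calculation'' announced in the text, and the only points deserving a little care are the reduction $\varphi(z)=z\psi(z)$ through the Schwarz lemma (which is exactly where $f''(0)=0$ enters) and the lower estimate for $\lvert 1-z^{2}\psi(z)\rvert$. An equivalent route is to work with $w=z\varphi$: the condition $f''(0)=0$ makes $w$ vanish to second order at the origin, so $\lvert w(z)\rvert\le\lvert z\rvert^{2}$, and the same algebra reduces the statement to this Schwarz estimate; this formulation also makes transparent that the inequality is attained in the limit when $\psi$ is a unimodular constant, a remark worth recording when sharpness is discussed.
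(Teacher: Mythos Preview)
Your proof is correct and follows essentially the same route as the paper: write $\varphi(z)=z\psi(z)$ with $\lvert\psi\rvert\le 1$, substitute into (\ref{preschwarz_alpha}), estimate the denominator by $\lvert 1-z^{2}\psi(z)\rvert\ge 1-\lvert z\rvert^{2}\lvert\psi(z)\rvert$, and reduce to $\lvert\psi\rvert\le 1$. The paper presents this as a one-line chain of inequalities, while you spell out the intermediate steps and add the remark on sharpness, but the argument is the same.
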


\begin{proof} Since $\varphi(z)=z\psi(z)$, with $\lvert\psi(z)\rvert<1$, then in equation (\ref{preschwarz_alpha}) we have:
\begin{equation*}
\left\lvert\dfrac{f''}{f'}(z)\right\rvert\leq 2(1-\alpha)\dfrac{\lvert z\rvert\,\lvert \psi(z)\rvert}{1-\lvert z\rvert^2\lvert\psi(z)\rvert}\leq 2(1-\alpha)\dfrac{\lvert z\rvert}{1-\lvert z\rvert^2}.
\end{equation*}
\end{proof}

Robertson in \cite{ROBERTSON} shows that if $f\in\mathcal C_\alpha$ then $$\dfrac{1}{(1+\lvert z\rvert)^{2(1-\alpha)}}\leq \lvert f'(z)\rvert\leq \dfrac{1}{(1-\lvert z\rvert)^{2(1-\alpha)}},$$ and, if $\alpha\neq 1/2$ 
$$\dfrac{(1+\lvert z\rvert)^{2\alpha-1}-1}{2\alpha-1}\leq \lvert f(z)\rvert\leq \dfrac{1-(1-\lvert z\rvert)^{2\alpha-1}}{2\alpha-1},$$ for $\alpha=1/2$, $$\log(1+\lvert z\rvert)\leq \lvert f(z) \rvert\leq -\log(1-\lvert z\rvert)\,.$$ With the additional condition $f''(0)=0$, we improve these results which are summarized in the following theorem.

\begin{thm}\label{teo_4} If $f\in\mathcal C^0_\alpha$ then for all $z\in\D$, we have that $$\dfrac{1}{(1+\lvert z\rvert^2)^{1-\alpha}}\leq \lvert f'(z)\rvert\leq \dfrac{1}{(1-\lvert z\rvert^2)^{1-\alpha}},$$ and $$\int_0^{\lvert z\rvert}\dfrac{1}{(1+\zeta^2)^{1-\alpha}}\,d\zeta\leq \lvert f(z)\rvert\leq \int_0^{\lvert z\rvert}\dfrac{1}{(1-\zeta^2)^{1-\alpha}}\,d\zeta.$$ \end{thm}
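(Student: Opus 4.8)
The plan is to integrate the pointwise bound on $|f''/f'|$ from Proposition \ref{prop_3}. Since $f\in\mathcal C_\alpha^0$, that proposition gives, for $z\in\D$,
\[
\left|\frac{f''}{f'}(z)\right|\leq \frac{2(1-\alpha)|z|}{1-|z|^2}.
\]
Writing $\log f'(z)$ along a radial segment, for $z=re^{i\theta}$ we have $\log f'(z)=\int_0^r \frac{f''}{f'}(te^{i\theta})\,e^{i\theta}\,dt$, because $f'(0)=1$ and $f$ is locally univalent so $\log f'$ is a well-defined holomorphic branch vanishing at $0$. Taking real parts and using the triangle inequality for integrals,
\[
\bigl|\log|f'(z)|\bigr|\leq \left|\,\log f'(z)\,\right|\leq \int_0^{|z|}\frac{2(1-\alpha)t}{1-t^2}\,dt = -(1-\alpha)\log\!\left(1-|z|^2\right).
\]
Exponentiating yields $(1-|z|^2)^{1-\alpha}\leq |f'(z)|\leq (1-|z|^2)^{-(1-\alpha)}$, which is the first claimed inequality. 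Here the estimate $\bigl|\log|f'(z)|\bigr|\le |\log f'(z)|$ is just $|\RR w|\le |w|$; one should remark that the upper bound is the substantive one and the lower bound follows by the same computation applied to $-\log|f'|$.

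For the growth estimates, the upper bound is immediate: $|f(z)|=\left|\int_0^{|z|} f'(te^{i\theta})e^{i\theta}\,dt\right|\le \int_0^{|z|}|f'(te^{i\theta})|\,dt\le \int_0^{|z|}(1-t^2)^{-(1-\alpha)}\,dt$. The lower bound is the delicate one. The standard device is to estimate $|f(z)|$ from below by the length of the image of the radial segment to the nearest boundary point, or — more robustly here — to use that $\mathcal C_\alpha\subset\mathcal C$, so $f(\D)$ is convex; hence for the point $z_0\in\D$ realizing $\mathrm{dist}(0,\partial f(\D))$ along the direction of $f(z)$, the segment from $0$ to $f(z)$ lies in $f(\D)$, and one can parametrize it by its $f$-preimage, a curve $\gamma$ from $0$ to $z$ in $\D$. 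Then $|f(z)|=\int_\gamma |f'(\zeta)|\,|d\zeta|$, and one wants to bound this below by $\int_0^{|z|}(1+t^2)^{-(1-\alpha)}\,dt$.

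The main obstacle is precisely this lower growth bound: the lower bound on $|f'|$ we obtained is $(1-|\zeta|^2)^{1-\alpha}$, not $(1+|\zeta|^2)^{-(1-\alpha)}$, and naively integrating the former along $\gamma$ gives the wrong (weaker) constant unless one controls the geometry of $\gamma$. I expect the correct route is the classical one used by Robertson: pass to the extremal/worst-case configuration by a subordination or Loewner-type argument. Concretely, let $\Gamma$ be the preimage under $f$ of the segment $[0,f(z)]$; since $f$ is univalent, $\Gamma$ is a Jordan arc, and $|f(z)|=\int_\Gamma|f'|\,|d\zeta|\ge \int_\Gamma (1-|\zeta|^2)^{1-\alpha}|d\zeta|$; one then shows this last integral is minimized, over all arcs from $0$ to a point of modulus $|z|$, by... —and here is where care is needed, because $(1-r^2)^{1-\alpha}$ is \emph{decreasing} in $r$, so a minimizing path would want to travel at large $|\zeta|$, which is geometrically impossible while still reaching modulus $|z|$ monotonically. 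The resolution is to instead note $|f'(\zeta)|\ge (1+|\zeta|^2)^{-(1-\alpha)}$ holds as an \emph{additional} consequence of Proposition \ref{prop_3}: indeed $|\log f'(\zeta)|\le -(1-\alpha)\log(1-|\zeta|^2)$ does not directly give this, so one must sharpen the argument by integrating $\RR\{f''/f'\}$ rather than its modulus — using $\RR\{e^{i\theta}(f''/f')(te^{i\theta})\}\ge -\frac{2(1-\alpha)t}{1+t^2}$, which should follow from characterization (iii) of Theorem \ref{teo_1} together with $f''(0)=0$ (equivalently from $\varphi(z)=z\psi(z)$, giving $\RR\{(f''/f')(z)\,\overline{z}/|z|\}\ge -2(1-\alpha)|z|/(1+|z|^2)$ after an elementary estimate on the Möbius image). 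Granting that refined one-sided bound, $\log|f'(te^{i\theta})|\ge -(1-\alpha)\log(1+t^2)$, hence $|f'|\ge(1+|\zeta|^2)^{-(1-\alpha)}$, and now $(1+r^2)^{-(1-\alpha)}$ is decreasing, so along the Jordan arc $\Gamma$ whose modulus ranges over $[0,|z|]$ we get $|f(z)|\ge \int_0^{|z|}(1+t^2)^{-(1-\alpha)}\,dt$ by the standard reparametrization-by-modulus argument. Establishing that sharpened one-sided derivative bound is the crux; everything else is integration.
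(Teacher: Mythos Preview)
Your upper distortion bound is fine, but the sentence ``Exponentiating yields $(1-|z|^2)^{1-\alpha}\leq |f'(z)|\leq (1-|z|^2)^{-(1-\alpha)}$, which is the first claimed inequality'' is not correct: the theorem asserts the \emph{stronger} lower bound $|f'(z)|\ge (1+|z|^2)^{-(1-\alpha)}$, and since $(1-|z|^2)^{1-\alpha}<(1+|z|^2)^{-(1-\alpha)}$ for $z\neq 0$, your modulus estimate does not reach it. You yourself notice this later, when the growth lower bound forces you to the one-sided inequality
\[
\RR\!\left\{e^{i\theta}\frac{f''}{f'}(te^{i\theta})\right\}\ge -\frac{2(1-\alpha)t}{1+t^2},
\]
which you correctly flag as ``the crux'' but do not prove. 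Note that Theorem~\ref{teo_1}(iii) only encodes $|\varphi|\le 1$ and gives the weaker $\RR\{z f''/f'\}\ge -2(1-\alpha)|z|/(1+|z|)$; the hypothesis $f''(0)=0$ must enter through Schwarz applied to $\varphi$ itself.

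The paper supplies exactly this missing step, and the route is the one you gesture at with ``$\varphi(z)=z\psi(z)$'': apply the Schwarz Lemma to $\varphi$ (so $|\varphi(z)|\le |z|$, not merely $\le 1$), substitute $\varphi=\dfrac{f''/f'}{2(1-\alpha)+z f''/f'}$, expand $|\varphi|^2\le |z|^2$, and complete the square. One obtains the disk inequality
\[
\left|(1-|z|^4)\,z\,\frac{f''}{f'}(z)-2|z|^4(1-\alpha)\right|\le 2|z|^2(1-\alpha),
\]
which is precisely the $\mathcal C_\alpha^0$-analogue of Theorem~\ref{teo_1}(iii) with $|z|$ replaced by $|z|^2$. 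Taking real parts gives simultaneously
\[
-\frac{2(1-\alpha)|z|^2}{1+|z|^2}\le \RR\!\left\{z\,\frac{f''}{f'}(z)\right\}\le \frac{2(1-\alpha)|z|^2}{1-|z|^2},
\]
and radial integration of $\frac{\partial}{\partial r}\log|f'(re^{i\theta})|=\frac{1}{r}\RR\{z f''/f'\}$ yields both distortion bounds at once. With the correct lower bound $|f'|\ge (1+|\zeta|^2)^{-(1-\alpha)}$ in hand, your argument for the growth inequalities (radial segment for the upper bound, preimage of the segment $[0,f(z_0)]$ for the lower bound, using that $(1+r^2)^{-(1-\alpha)}$ is decreasing) is exactly what the paper does.
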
 We will see in Example \ref{ej_1}, that this bounds are sharp within $\mathcal C_\alpha^0$. In particular, when $\alpha=0$, which corresponds to the subclass $\mathcal C$ with $f''(0)=0$ denoted by $\mathcal C^0$, we have that $$\tan^{-1}(\lvert z\rvert)\leq \lvert f(z)\rvert\leq \frac12\log\left(\dfrac{1+\lvert z\rvert}{1-\lvert z\rvert}\right).$$ Clearly, these bounds are better than the well-known bounds for the entire convex class $\mathcal C$ (see \cite[p. 43]{GRAHAM_KOHR}). We will see in Theorem 5 that $\lVert Sf\rVert\leq 2(1-\alpha^2)$ for any $f\in \mathcal C_\alpha^0$, therefore, a comparison is in order between these bounds and those obtained by Chuaqui and Osgood in \cite{CO}. These authors proved certain sharp bounds for distortion and growth for mappings with the condition that $(1-\lvert z\rvert^2)^2\,\lvert Sf\rvert\leq 2t$ when $t\in[0,1]$. They showed that if $f$ is holomorphic in $\D$ such that $f(0)=0$, $f'(0)=1$ and $f''(0)=0$, then:
\begin{align*}
    A(\lvert z\rvert,-t) & \leq \lvert f(z)\rvert \leq A(\lvert z\rvert,t), \\
    A'(\lvert z\rvert,-t) & \leq \lvert f'(z)\rvert \leq A'(\lvert z\rvert,t),
\end{align*} where 

\begin{equation*}
    A(z,t)=\dfrac{1}{\sqrt{1-t}}\dfrac{(1+z)^{\sqrt{1-t}}-(1-z)^{\sqrt{1-t}}}{(1+z)^{\sqrt{1-t}}+(1-z)^{\sqrt{1-t}}}.
\end{equation*}

Our case ($\alpha=0$) corresponds to their case when $t=1$, where the upper bound for growth coincides with our upper bound because the mapping for which equality holds belongs to both classes. However, for the lower bound the situation is completely different since their lower bound is less than or equal to our lower bound. Moreover, when $\alpha=1/2$, the growth inequality in Theorem \ref{teo_4} is $\sinh^{-1}(\lvert z\rvert)\leq \lvert f(z)\rvert\leq \sin^{-1}(\lvert z\rvert),$ but the corresponding parameter is  $t=3/4$, hence our result is more precise than the one obtained by Chuaqui and Osgood in \cite{CO}, which is not surprising since in their case the family is larger than $\mathcal C_\alpha^0$. 

\begin{proof}[Proof of Theorem 4]

From equation (\ref{phi_preschwarz}) and considering that $\varphi(0)=0$ (because $f''(0)=0$), then by the Schwarz Lemma we have that
\begin{equation*}
    \left\lvert \dfrac{(f''/f')(z)}{2(1-\alpha)+z(f''/f')(z)}\right\rvert^2\leq \lvert z\rvert^2,
\end{equation*}
which is equivalent to
\begin{equation*}
    (1-\lvert z\rvert^4)\left\rvert\dfrac{f''}{f'}(z)\right\rvert^2\leq 4\lvert z\rvert^2(1-\alpha)^2+4\lvert z\rvert^2(1-\alpha)\RR\left\{z\dfrac{f''}{f'}(z)\right\}.
\end{equation*}
If we first multiply by $(1-\lvert z\rvert^4)$ and then we add $4(\lvert z\rvert^2(1-\alpha)\lvert \bar{z}\rvert)^2$ on both sides, we obtain
\begin{equation*}
\begin{split}
    (1&-\lvert z\rvert^4)^2\left\lvert\dfrac{f''}{f'}(z)\right\rvert^2 - 4\lvert z\rvert^2(1-\lvert z\rvert^4)(1-\alpha)\RR\left\{z\dfrac{f''}{f'}(z)\right\}+
     4(\lvert z\rvert^2(1-\alpha)\lvert \bar{z}\rvert)^2\\ \leq &{} 4(1-\lvert z\rvert^4)\lvert z\rvert^2(1-\alpha)^2+4\lvert z\rvert^4(1-\alpha)^2\lvert z\rvert^2=4\lvert z\rvert^2(1-\alpha)^2.
\end{split}
\end{equation*}
Then multiplying on both sides by $\lvert z\rvert $ we have
\begin{equation*}
    \left\lvert(1-\lvert z\rvert^4)z\dfrac{f''}{f'}(z)-2\lvert z\rvert^4(1-\alpha)\right\rvert\leq 2\lvert z\rvert^2 (1-\alpha).
\end{equation*}
This implies

\begin{equation*}
    \dfrac{-2\lvert z\rvert^2(1-\alpha)+2\lvert z\rvert^4(1-\alpha)}{1+\lvert z\rvert^4}\leq\RR\left\{z\dfrac{f''}{f'}(z)\right\}\leq\dfrac{2\lvert z\rvert^2(1-\alpha)+2\lvert z\rvert^4(1-\alpha)}{1-\lvert z\rvert^4}.
\end{equation*}
Which it is equivalent to

\begin{equation*}
    \dfrac{-2(1-\alpha)\lvert z\rvert^2}{1+\lvert z\rvert ^2}\leq\RR\left\{z\,\dfrac{f''}{f'}(z)\right\}\leq \dfrac{2(1-\alpha)\lvert z\rvert ^2}{1-\lvert z\rvert^2}.
\end{equation*}
Let $z=re^{i\theta}$, then we have
\begin{equation*}
    \dfrac{-2(1-\alpha)r}{1+r^2}\leq \dfrac{\partial}{\partial r} (\log\lvert f'(re^{i\theta})\rvert)\leq \dfrac{2(1-\alpha) r}{1-r^2}.
\end{equation*}
If we integrate respect to $r$, we obtain
\begin{equation*}
    \log(1+r^2)^{-(1-\alpha)}\leq \log\lvert f'(re^{i\theta})\rvert\leq \log(1-r^2)^{-(1-\alpha)}
\end{equation*}
Exponentiating we can conclude that
\begin{equation*}
    \dfrac{1}{(1+\lvert z\rvert^2)^{1-\alpha}}\leq \lvert f'(z)\rvert\leq \dfrac{1}{(1-\lvert z\rvert ^2)^{1-\alpha}}.
\end{equation*}
Now, for the growth part of the theorem, from the upper bound it follows that
\begin{equation*}
    \lvert f(re^{i\theta})\rvert = \left\lvert \int_0^r f'(t e^{i\theta})e^{i\theta}\,dt\right\rvert\leq \int_0^r\lvert f'(t e^{i\theta})\rvert\,dt \leq \int_0^r\dfrac{1}{(1-t^2)^{1-\alpha}}\,dt.\\
\end{equation*}
Which implies that
\begin{equation*}
    \lvert f(z)\rvert \leq \int_0^{\lvert z\rvert}\dfrac{1}{(1-\zeta^2)^{1-\alpha}}d\zeta,
\end{equation*} for all $z\in\D$. It is well known that if $f(z_0)$ is a point of minimum modulus on the image of the circle $\lvert z\rvert =r$ and $\gamma=f^{-1}(\Gamma)$, where $\Gamma$ is the line segment from $0$ to $f(z_0)$, then
\begin{equation*}
\lvert f(z)\rvert\geq \lvert f(z_0)\rvert =\int_\Gamma \lvert dw\rvert=\int_\gamma \lvert f'(\zeta)\rvert\,\lvert d\zeta\rvert\geq \int_0^r\dfrac{1}{(1+\lvert\zeta\rvert^2)^{1-\alpha}}\,d\lvert\zeta\rvert.
\end{equation*}
Thus, the proof is completed.
\end{proof}

\section{On the norm of the Schwarzian derivative when $f''(0)=0$.}\label{norm of Sf}

This section is devoted to finding the sharp bound of the norm of the Schwarzian Derivative in terms of the parameter $\alpha$ under the assumption that $f''(0)=0$. We define the $\mathcal{C}_\alpha^0$ class as those $f\in\mathcal{C}_\alpha$  such that $f''(0)=0$. Firstly, form equation (\ref{preschwarz_alpha}) it follows that
\begin{equation}\label{schwarz_alpha}
    Sf(z)=2(1-\alpha)\left(\dfrac{\varphi'(z)+\alpha\,\varphi^2(z)}{(1-z\varphi(z))^2}\right).
\end{equation} Suita in \cite{SUITA} proved the sharp bounds of $\lVert Sf\rVert$ for the hole class $\mathcal{C}_\alpha$. We present this results as the following Theorem:\\

\noindent\textbf{Theorem A.}\label{teo_A}\textit{
If $f\in\mathcal{C}_\alpha$ then  for all $z\in \D$ we have that if $0\leq\alpha\leq 1/2$:
\begin{equation*}
    (1-\lvert z\rvert^2)^2\,\lvert Sf(z)\rvert\leq 2,
\end{equation*}
and for $1/2<\alpha<1$:
\begin{equation*}
    (1-\lvert z\rvert^2)^2\,\lvert Sf(z)\rvert\leq 8\alpha(1-\alpha).
\end{equation*}}\\

The next theorem gives us a sharp bound for the expression $(1-\lvert z\rvert^2)^2\,\lvert Sf(z)\rvert$ when $f\in\mathcal{C}_\alpha^0$ by a direct application of the Schwarz Lemma.

\begin{thm}\label{teo_2} If $f\in \mathcal C_\alpha^0$ then 
\begin{equation*}
(1-\lvert z\rvert^2)^2\,\lvert Sf(z)\rvert\leq 2(1-\alpha^2),\ \forall z\in\D.
\end{equation*}
The inequality is sharp.
\end{thm}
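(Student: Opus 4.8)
The plan is to feed the representation (\ref{schwarz_alpha}) of $Sf$ into the Schwarz Lemma, using that $f''(0)=0$ forces $\varphi(0)=0$, so that $\varphi(z)=z\psi(z)$ with $\psi$ holomorphic and $|\psi|<1$ in $\D$ (as recorded just before Proposition \ref{prop_3}). Substituting $\varphi=z\psi$ into (\ref{schwarz_alpha}) gives
\begin{equation*}
Sf(z)=2(1-\alpha)\,\frac{\psi(z)+z\psi'(z)+\alpha\,z^{2}\psi(z)^{2}}{(1-z^{2}\psi(z))^{2}}.
\end{equation*}
First I would estimate the numerator and the denominator using the Schwarz--Pick inequality $|\psi'(z)|\leq(1-|\psi(z)|^{2})/(1-|z|^{2})$. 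Writing $r=|z|$ and $a=|\psi(z)|\in[0,1]$, the triangle inequality and a one-line simplification give
\begin{equation*}
|\psi(z)+z\psi'(z)|\leq a+\frac{r(1-a^{2})}{1-r^{2}}=\frac{(a+r)(1-ar)}{1-r^{2}},\qquad |1-z^{2}\psi(z)|\geq 1-r^{2}a,
\end{equation*}
so that
\begin{equation*}
(1-|z|^{2})^{2}\,|Sf(z)|\leq 2(1-\alpha)\,\frac{(1-r^{2})(a+r)(1-ar)+\alpha\,r^{2}a^{2}(1-r^{2})^{2}}{(1-r^{2}a)^{2}}.
\end{equation*}
Since $\alpha<1$, the theorem is then reduced to the purely algebraic inequality
\begin{equation*}
F(r,a):=(1+\alpha)(1-r^{2}a)^{2}-(1-r^{2})(a+r)(1-ar)-\alpha\,r^{2}a^{2}(1-r^{2})^{2}\geq 0\qquad (r,a\in[0,1]).
\end{equation*}

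The decisive step, and the one I expect to be the main obstacle, is this two‑variable inequality. The idea is to isolate the dependence on $\alpha$: write $F=H+\alpha G$ with $G=(1-r^{2}a)^{2}-\big(ra(1-r^{2})\big)^{2}$ and $H=(1-r^{2}a)^{2}-(1-r^{2})(a+r)(1-ar)$, so it suffices to prove $G\geq 0$ and $H\geq 0$ separately on $[0,1]^{2}$. For $G$: both $1-r^{2}a$ and $ra(1-r^{2})$ are nonnegative, and $ra(1-r^{2})\leq 1-r^{2}a$ is equivalent to $a(r+r^{2}-r^{3})\leq 1$, which holds because $a\leq 1$ and $g(r)=r+r^{2}-r^{3}$ is increasing on $[0,1]$ (indeed $g'(r)=3(1-r)(r+\tfrac13)\geq0$), hence $g(r)\leq g(1)=1$. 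For $H$: expanded in $a$ it is a quadratic with positive leading coefficient $r(1-r^{2}+r^{3})$ that vanishes at $a=1$; its other root equals $(1-r+r^{3})/\big(r(1-r^{2}+r^{3})\big)$, and this is $\geq1$ precisely because $1-2r+2r^{3}-r^{4}\geq0$ on $[0,1]$ — which in turn holds since that polynomial is convex on $[0,1]$ and has both value $0$ and derivative $0$ at $r=1$, so it stays above its (horizontal) tangent there. Hence $H\geq 0$ on $[0,1]$, so $F\geq 0$, which yields the bound $2(1-\alpha^{2})$. The subtlety is exactly that $H$ touches zero at $a=1$, so a crude estimate of the quadratic would not suffice; one genuinely needs the location of its second root.

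Finally, for sharpness I would exhibit the extremal map: let $f$ be determined by $f'(z)=(1-z^{2})^{-(1-\alpha)}$, $f(0)=0$, equivalently $f''(z)/f'(z)=2(1-\alpha)z/(1-z^{2})$. Then $1+z\,f''(z)/f'(z)=\ell_{\alpha}(z^{2})\in H_{\alpha}$, so $f\in\mathcal{C}_{\alpha}$, and $f''(0)=0$, so $f\in\mathcal{C}_{\alpha}^{0}$; this is exactly the case $\varphi(z)=z$ in (\ref{schwarz_alpha}), giving $Sf(z)=2(1-\alpha)(1+\alpha z^{2})/(1-z^{2})^{2}$. Taking $z=r\in(0,1)$ yields $(1-r^{2})^{2}|Sf(r)|=2(1-\alpha)(1+\alpha r^{2})\to 2(1-\alpha^{2})$ as $r\to 1^{-}$, so the constant cannot be improved within $\mathcal{C}_{\alpha}^{0}$.
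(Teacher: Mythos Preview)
Your argument is correct, but it proceeds by a genuinely different route than the paper. After the common starting point (\ref{schwarz_alpha}), the paper does \emph{not} substitute $\varphi=z\psi$; instead it applies Schwarz--Pick directly to $\varphi$ and introduces the auxiliary quantity $\Phi(z)=(\bar z-\varphi(z))/(1-z\varphi(z))$, exploiting the Möbius-type identity $1-|\Phi|^{2}=\dfrac{(1-|z|^{2})(1-|\varphi|^{2})}{|1-z\varphi|^{2}}$ to rewrite the estimate as
\[
(1-|z|^{2})^{2}\,|Sf(z)|\leq 2(1-\alpha)\,(1-|\Phi|^{2})\left[1+\alpha\,\frac{|\varphi(z)|^{2}(1-|z|^{2})}{1-|\varphi(z)|^{2}}\right].
\]
At this point the hypothesis $\varphi(0)=0$ enters only through the Schwarz inequality $|\varphi(z)|\le|z|$, which bounds the bracket by $1+\alpha|z|^{2}$ and gives $2(1-\alpha^{2})$ immediately. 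No two-variable polynomial inequality is needed. By contrast, you push $\varphi(0)=0$ all the way into the formula via $\varphi=z\psi$, apply Schwarz--Pick to $\psi$, and then must verify the nontrivial inequality $F(r,a)=H+\alpha G\ge 0$ on $[0,1]^{2}$; your verification (factoring $H$ at $a=1$, locating the second root, and the convexity argument for $1-2r+2r^{3}-r^{4}$) is correct but is exactly the algebra that the $\Phi$-trick circumvents. The paper's approach is shorter and yields the reusable intermediate bound (\ref{ineq_imp_2}), which is later recycled in Remark~\ref{remark_1} and in the proof of Theorem~\ref{teo_3}; your approach is more self-contained and elementary but does not produce that intermediate step. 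Your sharpness argument via $f'(z)=(1-z^{2})^{-(1-\alpha)}$ is the same as the paper's Example~\ref{ej_1}.
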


\begin{proof}
From (\ref{schwarz_alpha}) we obtain, by applying the triangle inequality and the Schwarz-Pick lemma, that:
\begin{equation}\label{ineq_imp}
    (1-\lvert z\rvert^2)^2\,\lvert Sf(z)\rvert\leq 2(1-\alpha)\dfrac{(1-\lvert z\rvert^2)^2}{\lvert 1-z\varphi(z)\rvert^2}\left[\dfrac{1-\lvert\varphi(z)\rvert^2}{1-\lvert z\rvert^2}+\alpha\lvert\varphi(z)\rvert^2\right].
\end{equation}
We define the function $\Phi:\D\rightarrow\C$ such that:
\begin{equation*}
    \Phi(z)=\dfrac{\bar{z}-\varphi(z)}{1-z\varphi(z)}.
\end{equation*}
Since $\varphi(\D)\subset\D$, then $(1-\lvert z\rvert^2)(1-\lvert\varphi(z)\rvert^2)>0$, which implies that \begin{equation*}
    \lvert\bar{z}-\varphi(z)\rvert^2<\lvert1-z\varphi(z)\rvert^2,
\end{equation*}
so, we can conclude that $\lvert\Phi(z)\rvert^2<1$. Therefore,
\begin{equation*}
    1-\lvert\Phi(z)\rvert^2=\dfrac{(1-\lvert\varphi(z)\rvert^2)(1-\lvert z\rvert^2)}{\lvert1-z\varphi(z)\rvert^2},
\end{equation*}
and,
\begin{equation*}
    \dfrac{(1-\lvert z\rvert^2)^2}{\lvert1-z\varphi(z)\rvert^2}=\dfrac{(1-\lvert\Phi(z)\rvert^2)(1-\lvert z\rvert^2)}{(1-\lvert\varphi(z)\rvert^2)}.
\end{equation*}
If we replace the latter expression in (\ref{ineq_imp}) we have:
\begin{equation}\label{ineq_imp_2}
(1-\lvert z\rvert^2)^2\,\lvert Sf(z)\rvert\leq2(1-\alpha)(1-\lvert\Phi(z)\rvert^2)\left[1+\alpha\left(\dfrac{\lvert\varphi(z)\rvert^2(1-\lvert z\rvert^2)}{1-\lvert\varphi(z)\rvert^2}\right)\right].
\end{equation}
From the normalization and using the equation (\ref{preschwarz_alpha}) we conclude that $\varphi(0)=0$. Then, by Schwarz lemma, we obtain that $\lvert\varphi(z)\rvert\leq\lvert z\rvert$ for all $z\in\D$ and from this we can conclude that:
\begin{equation*}
    \dfrac{\lvert\varphi(z)\rvert^2}{1-\lvert\varphi(z)\rvert^2}\leq\dfrac{\lvert z\rvert^2}{1-\lvert z\rvert^2}.
\end{equation*}
Applying the above in (\ref{ineq_imp}) we obtain:
\begin{equation*}
    (1-\lvert z\rvert^2)^2\,\lvert Sf(z)\rvert\leq 2(1-\alpha)(1-\lvert\Phi(z)\rvert^2)(1+\alpha\lvert z\rvert^2).
\end{equation*}
We know that $1-\lvert\Phi(z)\rvert^2\leq 1$, then:
\begin{equation*}
    (1-\lvert z\rvert^2)^2\,\lvert Sf(z)\rvert\leq 2(1-\alpha)(1+\alpha)=2(1-\alpha^2).
\end{equation*}
The sharpness follows from the next example.
\end{proof}

\begin{ej}\label{ej_1}
The family of parameterized functions defined as:
\begin{equation}\label{example}
    f_\alpha(z)=\int_0^z\dfrac{1}{(1-\zeta^2)^{1-\alpha}}\,d\zeta,\ z\in\D,
\end{equation}
maximizes the Schwarzian norm defined as:
\begin{equation*}
    \lVert Sf\rVert=\sup_{z\in\D}(1-\lvert z\rvert^2)^2\,\lvert Sf(z)\rvert,
\end{equation*}
and from this, the sharpness of the inequality given in Theorem \ref{teo_2} holds for any $0\leq \alpha<1$. Notice that:

\begin{equation*}
    \dfrac{f_\alpha''}{f_\alpha'}(z)=2(1-\alpha)\dfrac{z}{1-z^2},\quad \mbox{and}\quad Sf_\alpha(z)=2(1-\alpha)\cdot\dfrac{1+\alpha z^2}{(1-z^2)^2}.
\end{equation*}
from which,
\begin{equation*}
(1-\lvert z\rvert^2)^2\,\lvert Sf_\alpha(z)\rvert=2(1-\alpha)
\dfrac{(1-\lvert z\rvert^2)^2\,\lvert 1+\alpha z^2\rvert}{\lvert 1-z^2\rvert^2} \leq 2(1-\alpha)\,\lvert1+\alpha z^2\rvert.
\end{equation*}
We can conclude that:
\begin{equation*}
    \lVert Sf_\alpha\rVert=\sup_{z\in\D}(1-\lvert z\rvert^2)^2\,\lvert Sf_\alpha(z)\rvert=2(1-\alpha)(1+\alpha)=2(1-\alpha^2).
\end{equation*}
In general, the integral formula for $f_\alpha$ given in (\ref{example}) does not give primitives in terms of elementary functions, however, when $\alpha=0$ we obtain:
\begin{equation*}
    f_0(z)=\int_0^z\dfrac{1}{1-\zeta^2}\,d\zeta=\dfrac{1}{2}\log\left(\dfrac{1+z}{1-z}\right),\end{equation*}
where $\lVert Sf_0\rVert=2$. Also, for $\alpha=1/2$ we have that:
\begin{equation*}
    f_{1/2}(z)=\int_0^z\dfrac{1}{(1-\zeta^2)^{1/2}}\,d\zeta=\sin^{-1}(z),
\end{equation*}
where $\lVert Sf_{1/2}\rVert=3/2$.
\end{ej}

\begin{rem}\label{remark_1}
In \cite{CDO} is presented the next inequality for $f\in\mathcal{C}$:

\begin{equation}\label{ineq_1}
    (1-\lvert z\rvert^2)^2\,\lvert Sf(z)\rvert+2\left\lvert\dfrac{\varphi(z)-\bar{z}}{1-z\varphi(z)}\right\rvert^2\leq 2,
\end{equation}
which is equivalent to:
\begin{equation}\label{ineq_2}
    (1-\lvert z\rvert^2)^2\,\lvert Sf(z)\rvert+2\left\rvert\bar{z}-\dfrac{1}{2}(1-\lvert z\rvert^2)\dfrac{f''}{f'}(z)\right\lvert^2\leq 2.
\end{equation}
 However, for the class $\mathcal{C}_\alpha$ we can obtain from (\ref{ineq_imp_2}) an analogous inequality associated to (\ref{ineq_1}) and (\ref{ineq_2}). In fact, we have that 
\begin{equation}\label{ineq_5}
    (1-\lvert z\rvert^2)^2\,\lvert Sf(z)\rvert\leq 2(1-\alpha)\left[(1+\alpha)-\left\lvert\dfrac{\varphi(z)-\bar{z}}{1-z\varphi(z)}\right\rvert^2-\alpha\, A\right],
\end{equation}
where,  using the equation (\ref{phi_preschwarz}), 
\begin{equation}\label{A}
    A=1-\left(\dfrac{(1-\lvert z\rvert^2)\lvert \varphi(z)\rvert}{\lvert1-z\varphi(z)\rvert}\right)^2=1-\left(\dfrac{1-\lvert z\rvert^2}{2(1-\alpha)}\left\lvert\dfrac{f''}{f'}(z)\right\rvert\right)^2.
\end{equation}
Now, from (\ref{ineq_5}) we have that:
\begin{equation*}
    (1-\lvert z\rvert^2)^2\,\lvert Sf(z)\rvert+2(1-\alpha)\left\lvert\dfrac{\varphi(z)-\bar{z}}{1-z\varphi(z)}\right\rvert^2+2(1-\alpha)\alpha\, A\leq 2(1-\alpha^2),
\end{equation*} which is equivalent to 
\begin{equation*}
    (1-\lvert z\rvert^2)^2\,\lvert Sf(z)\rvert+2(1-\alpha)\left\lvert\bar z-\dfrac{1-\lvert z\rvert^2}{2(1-\alpha)}\dfrac{f''}{f'}(z)\right\rvert^2+2(1-\alpha)\alpha\,A\leq 2(1-\alpha^2).
\end{equation*}
Thus, when $A\geq0$ we have that $\lVert Sf\rVert\leq2(1-\alpha^2)$, which occurs when $\varphi(0)=0$, because of $f''(0)=0$. In this case, using the Schwarz Lemma we can show that $A\geq 1-\lvert\varphi(z)\rvert$. Therefore we can assert the following results that improve Theorem 1 in \cite{Y}.

\end{rem}

The next theorem gives us a bound for the expression $(1-\lvert z\rvert^2)^2\,\lvert Sf(z)\rvert$ for $f\in\mathcal{C}_\alpha$ in terms of value $\lvert f''(0)\rvert$, which is not necessarily zero. To do this, we need the next lemma (its proof can be found in \cite[p. 167]{NEHARI_CONFORMAL}) which gives us a generalization of the Schwarz Lemma for bounded holomorphic functions:

\begin{lemma}\label{lemma_1}
If $\varphi:\D\rightarrow\D$ be a holomorphic function then
\begin{equation*}
    \lvert\varphi(z)\rvert\leq\dfrac{\lvert\varphi(0)\rvert+\lvert z\rvert}{1+\lvert\varphi(0)\rvert\,\lvert z\rvert}.
\end{equation*}
\end{lemma}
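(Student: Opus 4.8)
The plan is to reduce to the classical Schwarz Lemma by precomposing $\varphi$ with a disk automorphism that carries $\varphi(0)$ to the origin, and then to read off the stated bound by maximising a M\"obius image over a disk.

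Set $a=\varphi(0)$. If $a=0$ the asserted inequality is precisely the Schwarz Lemma, so suppose $0<|a|<1$. Let $\tau(w)=\dfrac{w-a}{1-\bar a w}$, which is an automorphism of $\D$ with $\tau(a)=0$ and inverse $\tau^{-1}(u)=\dfrac{u+a}{1+\bar a u}$. Then $\psi:=\tau\circ\varphi$ is holomorphic, maps $\D$ into $\D$, and satisfies $\psi(0)=\tau(\varphi(0))=0$; the Schwarz Lemma therefore gives $|\psi(z)|\le|z|$ for every $z\in\D$. Since $\varphi=\tau^{-1}\circ\psi$, this means that for each fixed $z$ the value $\varphi(z)$ lies in the set $\tau^{-1}(\{|u|\le|z|\})$.

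Because $\tau^{-1}$ is a M\"obius transformation whose pole $-1/\bar a$ has modulus $1/|a|>1\ge|z|$, the set $\tau^{-1}(\{|u|\le|z|\})$ is a closed Euclidean disk, so the supremum of $|w|$ over it is attained on its boundary circle $\tau^{-1}(\{|u|=|z|\})$. After rotating the coordinate we may assume $a=|\varphi(0)|\ge 0$; writing $r=|z|$ and $u=re^{i\theta}$ we get
\[
\left|\dfrac{u+a}{1+\bar a u}\right|^2=\dfrac{r^2+a^2+2ar\cos\theta}{1+a^2r^2+2ar\cos\theta},
\]
whose derivative in $\cos\theta$ carries the nonnegative factor $(1-r^2)(1-a^2)$, so the expression is nondecreasing in $\cos\theta$ and its maximum over $\theta$ is attained at $\theta=0$, i.e.\ at $u=r$, where it equals $\left(\dfrac{r+a}{1+ar}\right)^{2}$. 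This yields $|\varphi(z)|\le\dfrac{|\varphi(0)|+|z|}{1+|\varphi(0)|\,|z|}$, as desired.

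The one genuinely delicate point is this final maximisation: one must rule out that $\tau^{-1}$ attains a larger modulus at an interior point of $\{|u|\le r\}$, which is exactly what the ``image of a disk is a disk'' remark, combined with the monotonicity in $\cos\theta$, handles. Equivalently, one can square the Schwarz inequality to $|\varphi(z)-a|^2\le|z|^2\,|1-\bar a\varphi(z)|^2$, use $\RR(\bar a\varphi(z))\ge-|a|\,|\varphi(z)|$ to obtain a quadratic inequality for $t=|\varphi(z)|$ whose discriminant collapses to $|z|^2(1-|\varphi(0)|^2)^2$, and solve it; the two routes give the same bound. Everything else is the Schwarz Lemma together with elementary properties of the group of disk automorphisms, and a complete argument appears in \cite[p.~167]{NEHARI_CONFORMAL}.
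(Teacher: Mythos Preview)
Your argument is correct: composing with the automorphism $\tau$ to reduce to the Schwarz Lemma, and then maximising $|\tau^{-1}(u)|$ over $|u|\le|z|$ (either via the ``image of a disk is a disk'' observation plus the monotonicity in $\cos\theta$, or via the quadratic-in-$t$ route, whose discriminant you correctly identify as $4|z|^2(1-|\varphi(0)|^2)^2$) yields exactly the stated bound. The paper does not supply its own proof of this lemma at all; it merely cites \cite[p.~167]{NEHARI_CONFORMAL}, and what you have written is essentially the classical argument given there, so there is nothing to compare.
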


\begin{thm}\label{teo_3}
Let $0\leq\alpha<1$, $f\in\mathcal{C}_\alpha$ and $p=\dfrac{\lvert f''(0)\rvert}{2(1-\alpha)}$. Then:
\begin{equation*}
    (1-\lvert z\rvert^2)^2\,\lvert Sf(z)\rvert\leq 2(1-\alpha)\left(1+\alpha\,\dfrac{1+p}{1-p}\right).
\end{equation*}
\end{thm}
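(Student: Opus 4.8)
The plan is to run the argument of Theorem~\ref{teo_2} essentially verbatim up to inequality (\ref{ineq_imp_2}), and only at the very last step replace the classical Schwarz Lemma by Lemma~\ref{lemma_1}. First I would evaluate (\ref{preschwarz_alpha}) at $z=0$: since $f'(0)=1$ this gives $\varphi(0)=f''(0)/(2(1-\alpha))$, hence $\lvert\varphi(0)\rvert=p$. Starting from the representation (\ref{schwarz_alpha}) of $Sf$, the triangle inequality together with the Schwarz--Pick estimate $\lvert\varphi'(z)\rvert\le(1-\lvert\varphi(z)\rvert^2)/(1-\lvert z\rvert^2)$ yields (\ref{ineq_imp}); introducing $\Phi(z)=(\bar z-\varphi(z))/(1-z\varphi(z))$ and using the identities for $1-\lvert\Phi(z)\rvert^2$ and for $(1-\lvert z\rvert^2)^2/\lvert 1-z\varphi(z)\rvert^2$ exactly as in the proof of Theorem~\ref{teo_2}, this becomes
\begin{equation*}
(1-\lvert z\rvert^2)^2\,\lvert Sf(z)\rvert\le 2(1-\alpha)\,(1-\lvert\Phi(z)\rvert^2)\left[1+\alpha\,\frac{\lvert\varphi(z)\rvert^2(1-\lvert z\rvert^2)}{1-\lvert\varphi(z)\rvert^2}\right].
\end{equation*}
None of this uses $\varphi(0)=0$, so it is available in the present generality.

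The core of the proof is then to bound the quantity $\lvert\varphi(z)\rvert^2(1-\lvert z\rvert^2)/(1-\lvert\varphi(z)\rvert^2)$ using only $\lvert\varphi(0)\rvert=p$. Writing $r=\lvert z\rvert$ and noting that $t\mapsto t^2/(1-t^2)$ is increasing on $[0,1)$, Lemma~\ref{lemma_1} gives
\begin{equation*}
\frac{\lvert\varphi(z)\rvert^2}{1-\lvert\varphi(z)\rvert^2}\le\frac{(p+r)^2}{(1+pr)^2-(p+r)^2}.
\end{equation*}
The decisive algebraic simplification is the factorization $(1+pr)^2-(p+r)^2=(1-p^2)(1-r^2)$, so the right-hand side equals $(p+r)^2/\big((1-p^2)(1-r^2)\big)$. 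Multiplying by $1-r^2$ and using $p+r\le 1+p$ (valid since $r\le 1$) then gives
\begin{equation*}
\frac{\lvert\varphi(z)\rvert^2(1-\lvert z\rvert^2)}{1-\lvert\varphi(z)\rvert^2}\le\frac{(p+r)^2}{1-p^2}\le\frac{(1+p)^2}{1-p^2}=\frac{1+p}{1-p}.
\end{equation*}

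To finish, I would insert this bound, together with the trivial estimate $1-\lvert\Phi(z)\rvert^2\le 1$, into the first displayed inequality, obtaining $(1-\lvert z\rvert^2)^2\,\lvert Sf(z)\rvert\le 2(1-\alpha)\big(1+\alpha\,\tfrac{1+p}{1-p}\big)$, which is exactly the claimed bound; as a sanity check, $p=0$ recovers Theorem~\ref{teo_2}. I do not expect a genuine obstacle: the only points requiring care are checking that the generalized Schwarz bound $(p+r)/(1+pr)$ stays strictly below $1$ — which holds because $1+pr-(p+r)=(1-p)(1-r)>0$ whenever $p<1$, the range in which the statement is non-vacuous — so that the monotone substitution into $t^2/(1-t^2)$ is legitimate, and spotting the factorization $(1+pr)^2-(p+r)^2=(1-p^2)(1-r^2)$ that makes everything collapse to an $r$-free constant.
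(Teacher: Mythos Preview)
Your proof is correct and follows the paper's argument essentially verbatim: both invoke inequality (\ref{ineq_imp_2}) from the proof of Theorem~\ref{teo_2}, bound $\lvert\varphi(z)\rvert^2/(1-\lvert\varphi(z)\rvert^2)$ via Lemma~\ref{lemma_1} together with the factorization $(1+pr)^2-(p+r)^2=(1-p^2)(1-r^2)$, and then conclude using $r\le 1$ and $1-\lvert\Phi(z)\rvert^2\le 1$. Your added justifications (monotonicity of $t\mapsto t^2/(1-t^2)$, the check that $(p+r)/(1+pr)<1$, and the sanity check at $p=0$) are harmless elaborations of steps the paper leaves implicit.
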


\begin{proof}
Let $p=\lvert\varphi(0)\rvert$. Applying Lemma \ref{lemma_1}, we have that:
\begin{equation*}
    \dfrac{\lvert\varphi(z)\rvert^2}{1-\lvert\varphi(z)\rvert^2}\leq\dfrac{(p+\lvert z\rvert)^2}{(1+p\lvert z\rvert)^2-(p+\lvert z\rvert)^2}=\dfrac{(p+\lvert z\rvert)^2}{(1-p^2)(1-\lvert z\rvert^2)}.
\end{equation*}
If we replace the above expression in (\ref{ineq_imp_2}) at the proof of Theorem \ref{teo_2} we obtain that:
\begin{equation*}
    (1-\lvert z\rvert^2)^2\,\lvert Sf(z)\rvert\leq 2(1-\alpha)(1-\lvert\Phi(z)\rvert^2)\left(1+\alpha\,\dfrac{(p+\lvert z\rvert)^2}{1-p^2}\right).
\end{equation*}
From the fact that $\lvert z\rvert<\leq 1$ and $1-\lvert\Phi(z)\rvert^2<1$ we can conclude that:
\begin{equation*}
    (1-\lvert z\rvert^2)^2\,\lvert Sf(z)\rvert\leq 2(1-\alpha)\left(1+\alpha\,\dfrac{(p+1)^2}{1-p^2}\right)=2(1-\alpha)\left(1+\alpha\,\dfrac{1+p}{1-p}\right).
\end{equation*}
\end{proof}

\begin{rem}
Although the bound obtained is not sharp in general, for the $\mathcal C_\alpha$ class, we have that in certain cases the bound in Theorem \ref{teo_3} is better than the one given in Theorem A, (see \cite{SUITA}). In fact, for $0\leq\alpha\leq 1/2$ and $f\in\mathcal{C}_\alpha$ such that $p\leq \alpha(2-\alpha)^{-1}$ then $(1+p)(1-p)^{-1}\leq (1-\alpha)^{-1}$, which implies
\begin{equation*}\label{caso_1}
    2(1-\alpha)\left(1+\alpha\,\dfrac{1+p}{1-p}\right)\leq 2.
\end{equation*}
On the other side, if $1/2<\alpha< 1$ and $f$ is such that $p\leq (3\alpha-1)(5\alpha-1)^{-1}$ then $(1+p)(1-p)^{-1}\leq (8\alpha-2)(2\alpha)^{-1}$, from which
\begin{equation*}\label{caso_2}
     2(1-\alpha)\left(1+\alpha\,\dfrac{1+p}{1-p}\right)\leq 8\alpha(1-\alpha).
\end{equation*}
\end{rem}

\section*{aknowledgment}We thank our colleague, prof. Iason Efraimidis, from Universidad Autónoma de Madrid, for his precise contribution to improve the presentation of the manuscript, and the proof of Theorem 7.

\end{document}